\documentclass{amsart}
\usepackage{amsmath,amssymb,amscd,amsthm}
\usepackage[pdftex]{graphics}

\newcommand{\Z}{\mathbb Z}

\newcommand{\Am}{\operatorname{Am}}

\newcommand{\Cl}{\operatorname{Cl}}
\newcommand{\Gal}{\operatorname{Gal}}

\newcommand{\st}{{\operatorname{st}}}

\newcommand{\coker}{\operatorname{coker}\,}

\newcommand{\fin}{\operatorname{fin}}
\newcommand{\cO}{{\mathfrak O}}

\newcommand{\fa}{{\mathfrak a}}

\newcommand{\fb}{{\mathfrak b}}

\newcommand{\fp}{\mathfrak p}
\newcommand{\fP}{{\mathfrak P}}

\newcommand{\lra}{\longrightarrow}
\newcommand{\la}{\langle}
\newcommand{\ra}{\rangle}
\newcommand{\tH}{\widetilde{H}}
\newcommand{\tD}{\widetilde{D}}

\newcounter{Tc}

\newtheorem{thm}[Tc]{Theorem}
\newtheorem{prop}{Proposition}
\newtheorem{lem}{Lemma}

\title{The Ambiguous Class Number Formula Revisited}
\author{Franz Lemmermeyer}
\begin{document}
\begin{abstract}
We will give a simple proof of the ambiguous class number formula.
\end{abstract}
\maketitle

\pagestyle{myheadings}
\markboth{Ambiguous Class Numbers}{\today \hfil Franz Lemmermeyer}

\section*{Introduction}
Let $K$ be a number field, $L/K$ a cyclic extension of prime 
degree $\ell$, and $\sigma$ a generator of its Galois group
$G = \Gal(L/K)$. We say that an ideal class $[\fa] \in \Cl(L)$ is 
\begin{itemize}
\item {\em ambiguous} if $[\fa]^\sigma = [\fa]$, that is, if there exists an 
      element $\alpha \in L^\times$ such that $\fa^{\sigma-1} = (\alpha)$, or 
      more briefly, if $\fa^{\sigma-1} \in H_L$. 
\item {\em strongly ambiguous} if $\fa^{\sigma-1} = (1)$.
\end{itemize}
The group $\Am(L/K)$ of ambiguous ideal classes is
the subgroup of the class group $\Cl(L)$ consisting of ambiguous ideal 
classes. Its subgroup of strongly ambiguous ideal classes is denoted by
$\Am_\st(L/K)$.
 
The following ``ambiguous class number formula'' is well known: it 
gives the number of (strongly) ambiguous ideal classes in terms of 
the class number $h(K)$ of the base field, the number $t$ of ramified 
primes (including those at infinity), and  the index of the units that 
are norms (of integers and units, respectively) inside the unit group 
$E_K$ of $K$:

\begin{thm}\label{main}
The number of ambiguous ideal classes is given by 
\begin{align}
\label{ham}
 \# \Am(L/K) & = h(K) \cdot \frac{\ell^{t-1}}{(E_K:E_K \cap N L^\times)}, \\
\label{hams}
 \# \Am_\st(L/K) & = h(K) \cdot \frac{\ell^{t-1}}{(E_K:N E_L)},
\end{align}
where $E_K$ is the unit group of $K$, and $E_K \cap N L^\times$ the
subgroup of units that are norms of elements of $L$.
\end{thm}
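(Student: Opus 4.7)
The plan is to translate the counting into Galois cohomology for $G = \Gal(L/K) = \langle \sigma \rangle \cong \Z/\ell\Z$. Writing $\iota$ for the extension maps $I_K \hookrightarrow I_L^G$ and $P_K \hookrightarrow P_L^G$, we have $\Am(L/K) = \Cl(L)^G$ by definition, and since $(\gamma)^\sigma = (\gamma)$ iff $\gamma^{\sigma-1} \in E_L$, one has $I_L^G \cap P_L = P_L^G$, so $\Am_\st(L/K) = I_L^G/P_L^G$. The key inputs will be Hilbert 90 and the Herbrand quotient of $E_L$.

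Applying the long exact $G$-cohomology sequence to $1 \to E_L \to L^\times \to P_L \to 1$, with $H^1(G, L^\times) = 0$, yields
\begin{equation*}
0 \to E_K \to K^\times \to P_L^G \to H^1(G, E_L) \to 0,
\end{equation*}
so that $P_L^G/\iota(P_K) \cong H^1(G, E_L)$. Continuing the sequence and using the cyclic periodicity $H^2 \cong \hat H^0$, one identifies $H^1(G, P_L) = (E_K \cap NL^\times)/NE_L$. Applying the long exact sequence to $1 \to P_L \to I_L \to \Cl(L) \to 1$, combined with $H^1(G, I_L) = 0$ (by Shapiro's lemma together with the vanishing of $H^1(G', \Z)$ for finite $G'$), gives the five-term sequence
\begin{equation*}
0 \to P_L^G \to I_L^G \to \Am(L/K) \to (E_K \cap NL^\times)/NE_L \to 0,
\end{equation*}
so $\Am_\st$ is exactly the image of $I_L^G \to \Am(L/K)$ and $\Am/\Am_\st \cong (E_K \cap NL^\times)/NE_L$.

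To count $\Am_\st$, I use that $I_L^G$ is generated by $\iota(I_K)$ and the primes $\fP_i$ above the $t_f$ finite ramified primes $\fp_i$, modulo $\fP_i^\ell = \iota(\fp_i)$; hence $|I_L^G/\iota(I_K)| = \ell^{t_f}$. The snake lemma applied to the diagram
\begin{equation*}
\begin{CD}
0 @>>> \iota(P_K) @>>> \iota(I_K) @>>> \Cl(K) @>>> 0 \\
@. @VVV @VVV @VVV @. \\
0 @>>> P_L^G @>>> I_L^G @>>> \Am_\st @>>> 0
\end{CD}
\end{equation*}
(right vertical $[\fa] \mapsto [\fa\cO_L]$, with kernel the capitulation kernel $\mathcal K$) produces the exact sequence $0 \to \mathcal K \to H^1(G, E_L) \to (\Z/\ell\Z)^{t_f} \to \Am_\st/\iota(\Cl(K)) \to 0$; multiplying orders, $|\mathcal K|$ cancels against $|\iota(\Cl(K))| = h(K)/|\mathcal K|$ and yields $|\Am_\st| = h(K)\,\ell^{t_f}/|H^1(G, E_L)|$. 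The final input is the Herbrand quotient $Q(E_L) = \ell^{t_\infty - 1}$, where $t_\infty$ counts archimedean ramified places, proved via Dirichlet's unit theorem and the $G$-module structure of $E_L \otimes \R$; together with $H^1(G, E_L) = \hat H^{-1}(G, E_L)$ and $\hat H^0(G, E_L) = E_K/NE_L$, this gives $|H^1(G, E_L)| = \ell^{1-t_\infty}(E_K:NE_L)$. Substituting with $t = t_f + t_\infty$ proves \eqref{hams}; \eqref{ham} then follows from the five-term sequence together with $(E_K:NE_L) = (E_K : E_K \cap NL^\times)(E_K \cap NL^\times : NE_L)$. The main obstacle is the Herbrand quotient computation, which is precisely what forces $t$ to count archimedean ramification and supplies the factor converting $\ell^{t_f}$ into $\ell^{t-1}$.
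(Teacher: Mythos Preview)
Your argument is correct, and its content coincides with the paper's: both proofs rest on (i) the identification $\Am_\st(L/K)=D_L^G/H_L^G$, (ii) the computation $(D_L^G:\tD_K)=\ell^{t_f}$, (iii) the identification of $(H_L^G:\tH_K)$ with the order of $H^1(G,E_L)=E_L[N]/E_L^{1-\sigma}$ via Hilbert~90, and (iv) the Herbrand quotient of $E_L$ (which the paper states as the Unit Principal Genus Theorem). The paper's Proposition~\ref{PAmst} is exactly your five-term sequence truncated to its last three terms.

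The packaging differs, however, and this is worth noting since the paper explicitly sets out to avoid group cohomology. Where you invoke the long exact sequences attached to $1\to E_L\to L^\times\to P_L\to 1$ and $1\to P_L\to I_L\to \Cl(L)\to 1$, together with periodicity and Shapiro's lemma, the paper replaces each cohomological step by a direct index computation via its elementary Lemma~\ref{L2} (a snake-lemma consequence), introducing the auxiliary group $\Delta=\{\alpha\in L^\times:\alpha^{1-\sigma}\in E_L\}$ to carry out step~(iii) by hand. Your route is shorter and more conceptual if one is willing to use cohomology; the paper's route demonstrates that nothing beyond Hilbert~90 and the Herbrand quotient is genuinely needed. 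One small structural bonus of your version is that the capitulation kernel $\mathcal K$ appears explicitly in the snake sequence and then visibly cancels, whereas in the paper it is hidden inside the isomorphism $\tD_K/\tH_K\simeq D_K/H_K$.
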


The units in $E_K \cap N L^\times$, i.e., units in $K$ that are norms 
of elements in $L$, coincide with the units that are local norms 
everywhere (i.e., norms in every completion of $L/K$). This means
that the index $(E_K: E_K \cap N L^\times)$ can be computed in a purely
local way.

Since the classical approach to class field theory has gone out
of fashion, there are almost no modern books on class field theory
in which the ambiguous class number formula is proved; exceptions 
are Lang \cite{Lang} and Gras \cite{Gras}. For this reason we would 
like to present a modern and yet very elementary way of deriving the 
ambiguous class number formula from a result that can be found in any 
introduction to class field theory: the calculation of the Herbrand 
index of the unit group.

Our approach is close to that used in Lang \cite{Lang}, but does not 
use cohomology of groups. Readers familiar with cohomology will easily
recognize some of the exact sequences below as pieces of the long
exact cohomology sequence. For example, (\ref{ExAst}) is the beginning
of the cohomology sequence attached to 
$$ \begin{CD}
    1 @>>> H_L @>>> D_L @>>> \Cl(L) @>>> 1, \end{CD} $$
which is essentially the definition of the class group. In fact, taking 
fixed modules gives
$$ \begin{CD}
    1 @>>> H_L^G @>>> D_L^G @>>> \Am(L/K), \end{CD} $$
and we obtain (\ref{ExAst}) below by replacing $\Am(L/K)$ with the image
of $D_L^G$, which is $\Am_\st(L/K)$. We do not gain any additional insight,
however, by interpreting the almost obvious exact sequence (\ref{ExAst}),
which essentially is the definition of the strongly ambiguous ideal class
group, as a part of the long cohomology sequence.

We take this opportunity to invite the readers to look at a few
articles that provide an excellent background to our computations,
or show how to do everything in cohomological terms. In particular
we recommend Martinet \cite{Mtours} concerning discriminant bounds
and the cohomological approach to investigations of class groups 
given by Cornell \& Rosen \cite{CRCoh}. 

\section{Ambiguous and Strongly Ambiguous Ideals Classes}

Let $K$ be a number field, and let $L/K$ be a cyclic extension of prime
degree $\ell$. We will use the following notation:
\begin{itemize}
\item $D_K$, $D_L$ are the groups of nonzero fractional ideals in  
      $K$ and $L$;
\item $H_K$, $H_L$ are the groups of nonzero principal fractional 
      ideals in $K$ and $L$;
\item $\tD_K$ (and $\tH_K$) are the groups of all (resp. of all principal)
      ideals in $L$ generated by ideals (principal ideals) in $K$;
\item $E_K$, $E_L$ are the unit groups in $K$ and $L$;
\item $\Cl(K)$, $\Cl(L)$ denote the ideal class groups in $K$ and $L$;
\item $\sigma$ is a generator of the Galois group $G = \Gal(L/K)$;
\item $e(\fp)$ is the ramification index of the prime $\fp$ in $L/K$;
\item $N$ denotes the relative norm for $L/K$;
\item $A[N] = \{a \in A: Na = 1\}$ is the subgroup of $A$ killed by 
      the norm;
\item $A^G = \{a \in G: a^\sigma = a\}$ is the fix module of the $G$-module $A$; 
\end{itemize}

The following result shows that (\ref{ham}) and (\ref{hams}) are equivalent:

\begin{prop}\label{PAmst}
The difference between ambiguous and strongly ambiguous ideal classes
is measured by the exact sequence
$$ \begin{CD}
 1 @>>> \Am_\st(L/K) @>>> \Am(L/K) @>>> (E_K \cap N L^\times)/NE_L  @>>> 1.
    \end{CD} $$ 
\end{prop}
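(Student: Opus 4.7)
The plan is to construct a homomorphism
$\phi\colon \Am(L/K) \to (E_K \cap NL^\times)/NE_L$ and to show it is surjective with kernel $\Am_\st(L/K)$. Given $[\fa]\in\Am(L/K)$ I would choose $\alpha\in L^\times$ with $\fa^{\sigma-1}=(\alpha)$ and define $\phi([\fa])$ to be the coset of $N\alpha$. To see that $N\alpha\in E_K\cap NL^\times$, apply the norm to $\fa^{\sigma-1}=(\alpha)$: since $N\fa\in D_L^G$ is $\sigma$-fixed, $(N\alpha)=(N\fa)^{\sigma-1}=(1)$, so $N\alpha$ is a unit, and it is visibly a norm from $L^\times$. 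Well-definedness reduces to two checks: replacing $\alpha$ by $\alpha\eta$ with $\eta\in E_L$ changes $N\alpha$ by $N\eta\in NE_L$, and replacing $\fa$ by $\fa\cdot(\beta)$ changes $\alpha$ by $\beta^{\sigma-1}$, whose norm $(N\beta)^{\sigma-1}$ equals $1$ because $N\beta\in K^\times$.

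To compute the kernel, suppose $N\alpha\in NE_L$, say $N\alpha=N\eta$. Then $\alpha/\eta$ has norm $1$, and Hilbert 90 in $L^\times$ furnishes $\beta\in L^\times$ with $\alpha/\eta=\beta^{\sigma-1}$. Consequently $(\fa\beta^{-1})^{\sigma-1}=(\alpha)\cdot(\beta^{\sigma-1})^{-1}=(\eta)=(1)$, so $\fa\beta^{-1}$ is strongly ambiguous and represents $[\fa]$. The reverse inclusion $\Am_\st(L/K)\subseteq\ker\phi$ is immediate, since a strongly ambiguous class has a representative with $\fa^{\sigma-1}=(1)$, corresponding to $\alpha=1$.

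For surjectivity, take $u=N\gamma\in E_K\cap NL^\times$ with $\gamma\in L^\times$. Then $(\gamma)\in D_L$ satisfies $N((\gamma))=(u)=(1)$, so if I can write $(\gamma)=\fa^{\sigma-1}$ for some $\fa\in D_L$, the class $[\fa]$ is ambiguous and $\phi([\fa])$ equals the coset of $u$. I expect this last step to be the main obstacle: it is a ``Hilbert 90 for ideals'', namely the assertion $D_L[N]=(\sigma-1)D_L$. I would prove it by decomposing $D_L$ as a $G$-module according to the primes $\fp$ of $K$: if $\fp$ splits completely the local summand is $\Z[G]$, on which $\ker N$ and $(\sigma-1)D_L$ visibly coincide; if $\fp$ is inert or ramified the summand is $\Z$ with trivial $G$-action, and both subgroups collapse to $0$. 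Summing the local statements yields the global identity and completes the argument.
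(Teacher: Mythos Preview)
Your argument is correct and follows the same route as the paper: define the map via $[\fa]\mapsto N\alpha$ where $\fa^{\sigma-1}=(\alpha)$, compute the kernel using Hilbert~90 for elements, and obtain surjectivity via Hilbert~90 for ideals. Your treatment is in fact slightly more careful than the paper's (you check well-definedness under change of representative $\fa\mapsto\fa(\beta)$, and you sketch a proof of Hilbert~90 for ideals via the prime-by-prime decomposition rather than citing it), but the structure of the proof is identical.
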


\begin{proof}
We will first construct the map 
\begin{equation}\label{Enu} 
\nu: \Am(L/K) \lra (E_K \cap N L^\times)/NE_L 
\end{equation}
and then prove exactness by computing the image and the 
kernel of $\nu$. 

An ideal class $[\fa] \in \Cl(L)$ is ambiguous if and only if 
$\fa^\sigma = (\alpha) \fa $ for some $\alpha \in L^\times$.
Taking norms and canceling $N\fa$ yields $(N\alpha) = (1)$, 
which shows that $N\alpha$ is a unit and thus belongs to 
$E_K \cap NL^\times$. 

The map sending $[\fa] \in \Am(L/K)$ to $N\alpha \in E_K \cap NL^\times$ 
is not well-defined since we are allowed to change $\alpha$ by
a unit. Thus we obtain a well-defined homomorphism $\nu$ as in 
(\ref{Enu}).

For showing that $\nu$ is surjective assume 
$\beta \in E_K \cap NL^\times$ and write $\beta = N\alpha$.
Then $(N \alpha) = (1)$, so by Hilbert's Theorem 90 for ideals 
(see Artin's G\"ottingen lectures in \cite[p. 284]{Cohn}; we can 
actually imitate the proof of Hilbert 90 for elements by setting 
$\fa + \fb = \gcd(\fa,\fb)$) we have $(\alpha) = \fa^{\sigma-1}$ for 
some ideal $\fa$, i.e., $\fa^\sigma = (\alpha) \fa$, and this implies 
$[\fa] \in \Am_\st(L/K)$ and $\nu([\fa]) = \beta$.

The kernel of $\nu$ consists of all ideal classes 
$[\fa] \in \Am(L/K)$ for which $\fa^{\sigma-1} = (\alpha)$ and 
$N\alpha = N\eta$ for some unit $\eta \in E_L$. Thus we can 
write $ \fa^\sigma = (\alpha) \fa $ in the form 
$\fa^\sigma = (\alpha/\eta) \fa $. Now $N(\alpha/\eta) = 1$, 
hence $\alpha/\eta = \gamma^{1-\sigma}$ for some
$\gamma \in L^\times$, and we find $\fa^\sigma = \gamma^{1-\sigma} \fa $,
or $(\fa\gamma)^\sigma = \fa\gamma $. The last equation tells
us that $[\fa] = [\fa \gamma]$ is strongly ambiguous. Conversely, every 
strongly ambiguous class is easily seen to be in the kernel of $\nu$, 
and we have shown that $\ker \nu = \Am_\st(L/K)$.
This completes the proof.
\end{proof}

\section{A Group-Theoretical Lemma}

The following simple but useful lemma will be our main new tool in our
version of the proof of Thm.~\ref{main}:

\begin{lem}\label{L2}
Assume that the diagram
$$ \begin{CD}
    1  @>>> A @>>> B @>>> C @>>> 1 \\
    @.  @V{\alpha}VV @V{\beta}VV @V{\gamma}VV  @. \\
    1  @>>> A' @>>> B' @>>> C' @>>> 1    
    \end{CD} $$
is commutative with exact rows, and that the vertical maps are injective.
If two out of the indices $(A':A)$, $(B':B)$ and  $(C':C)$ are finite, 
then all three are, and we have
$$ (B':B) = (A':A)(C':C). $$
\end{lem}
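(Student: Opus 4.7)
My plan is to reduce the statement to the short exact sequence of cokernels
$$ 1 \lra A'/\alpha(A) \lra B'/\beta(B) \lra C'/\gamma(C) \lra 1 $$
and then read off both the index formula $|B'/\beta(B)| = |A'/\alpha(A)|\cdot|C'/\gamma(C)|$ and the two-out-of-three finiteness statement from the well-known fact about short exact sequences of abelian groups. Since the injective vertical maps let us identify the orders of these cokernels with $(A':A)$, $(B':B)$, $(C':C)$, this finishes the proof.

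To produce the sequence, I will lift the horizontal maps $i',p'$ of the bottom row to maps on cokernels. Commutativity of the diagram gives $i'(\alpha(A)) \subseteq \beta(B)$ and $p'(\beta(B)) \subseteq \gamma(C)$, so the formulas $a' + \alpha(A) \mapsto i'(a') + \beta(B)$ and $b' + \beta(B) \mapsto p'(b') + \gamma(C)$ define homomorphisms $\overline{i'}$ and $\overline{p'}$ between the cokernels. Surjectivity of $\overline{p'}$ is inherited from surjectivity of $p'$, and $\overline{p'}\,\overline{i'} = 0$ because $p'i' = 0$.

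The two remaining facts are injectivity of $\overline{i'}$ and the inclusion $\ker \overline{p'} \subseteq \im \overline{i'}$, both of which I plan to verify by a short diagram chase. For injectivity, from $i'(a') = \beta(b)$ I will push down via $p'$: the relation $\gamma(p(b)) = p'\beta(b) = p'i'(a') = 0$ and injectivity of $\gamma$ give $p(b) = 0$, so $b = i(a)$ for some $a \in A$; then $i'(a') = \beta i(a) = i'\alpha(a)$, and injectivity of $i'$ forces $a' = \alpha(a) \in \alpha(A)$. For exactness at the middle, from $p'(b') \in \gamma(C)$ I will use surjectivity of $p$ to write $p'(b') = \gamma(p(b)) = p'(\beta(b))$, so $b' - \beta(b) \in \ker p' = \im i'$, exhibiting $b' + \beta(B)$ in $\im \overline{i'}$.

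I do not foresee any real obstacle: the whole argument is a snake-lemma chase specialized to the setting where injectivity of the vertical maps collapses the kernel portion of the snake sequence to zero. In fact, the only step in the chase where injectivity of a vertical map is used is the injectivity of $\overline{i'}$ above, which makes transparent why that hypothesis is exactly what is needed.
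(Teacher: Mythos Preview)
Your proof is correct and is essentially the same approach as the paper's: both arguments amount to the snake lemma applied to the diagram, with the injectivity hypothesis killing the kernel terms so that only the short exact sequence of cokernels survives. The paper simply cites the Snake Lemma and reads off the alternating product of orders, whereas you carry out the cokernel diagram chase by hand; the content is identical.
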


We can drop the assumption that the vertical maps be injective if 
we assume that the vertical maps have finite kernel and cokernel, and 
define the index $(A':A)$ (formally) by
$(A':A) = \frac{\# \coker \alpha}{\# \ker \alpha}$. In our
applications, the vertical maps are always injective. Our
proof below is for the general case:

\begin{proof}[Proof of Lemma~\ref{L2}]
Since the kernel and the cokernel of $\alpha$ are finite, the index
$(A':A) = \frac{\# \coker \alpha}{\# \ker \alpha}$ is a rational number. 
Since the alternating product of the orders of groups in an exact sequence
of finite abelian groups equals $1$, the Snake Lemma gives
$$ \# \ker \alpha \cdot \# \ker \gamma \cdot \# \coker \beta
   = \# \ker \beta \cdot \# \coker \alpha \cdot \# \coker \gamma, $$
or
$$ \frac{\# \coker \alpha}{\# \ker \alpha} \cdot 
   \frac{\# \coker \gamma}{\# \ker \gamma} = 
   \frac{\# \coker \beta}{\# \ker \beta}, $$
which immediately implies our claim. 

The fact that all indices are finite when two of them are is easily
seen to hold by following the proof above.
\end{proof}

\section{Proof of Theorem~\ref{main}}

The basic idea for proving the ambiguous class number formula is
the same as in the classical index calculations of class field theory:
we transform indices of groups involving ideals into indices involving 
principal ideals, and then compute these from indices of unit groups 
and field elements. 

The exact sequence
\begin{equation}\label{ExAst} \begin{CD} 
   1 @>>> H_L^G @>>> D_L^G @>>>  \Am_\st(L/K)  @>>> 1
    \end{CD} \end{equation}
 tells us that 
$$ \# \Am_\st(L/K) = (D_L^G:H_L^G)
   =  \frac{(D_L^G:\tH_K)}{(H_L^G : \tH_K)}, $$
where $\tH_K$ denotes the group of principal ideals in $L$ generated by
elements of $K^\times$. Now 
$$ (D_L^G: \tH_K) 
       = (D_L^G:\tD_K)(\tD_K: \tH_K)
       = h_K \cdot (D_L^G:\tD_K)  $$ 
since $\tD_K/\tH_K \simeq D_K/H_K = \Cl(K)$. The index on the right 
is easily computed:

\begin{lem}\label{LDLG}
Let $L/K$ be a cyclic extension. Then 
$$ (D_L^G:\tD_K) = \prod_{\fp \fin} e(\fp), $$
where the product of over all finite primes $\fp$, and where 
$e(\fp)$ is the ramification index of $\fp$ in $L/K$.
\end{lem}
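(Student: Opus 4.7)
The plan is to reduce the computation to a purely local one by decomposing both $D_L^G$ and $\tD_K$ as direct sums indexed by the finite primes of $K$, and then to compute each local index separately; the product formula emerges by collecting these.

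The starting point is that $D_L$, being the free abelian group on the primes of $L$, splits canonically as $D_L = \bigoplus_{\fp} D_L(\fp)$, where $D_L(\fp)$ denotes the subgroup generated by the primes $\fP_1,\ldots,\fP_r$ of $L$ lying above $\fp$. Since $G$ permutes the primes above each fixed $\fp$ among themselves, this decomposition is $G$-stable, and taking invariants gives $D_L^G = \bigoplus_{\fp} D_L(\fp)^G$. The same indexing applies to $\tD_K$: it is freely generated by the extended ideals $\fp\cO_L$ as $\fp$ ranges over the finite primes of $K$, and each such ideal is supported only above $\fp$, so $\tD_K = \bigoplus_{\fp} \tD_K(\fp)$, compatibly with the splitting of $D_L$.

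Next I would identify the two local pieces explicitly. Because $L/K$ is Galois, $G$ acts transitively on $\{\fP_1,\ldots,\fP_r\}$, so a $G$-invariant element of $D_L(\fp)$ must assign the same exponent to every $\fP_i$; hence $D_L(\fp)^G = \Z\cdot(\fP_1\cdots\fP_r)$. Likewise the $\fP_i$ all share a common ramification index $e(\fp)$, so $\fp\cO_L = (\fP_1\cdots\fP_r)^{e(\fp)}$, which exhibits $\tD_K(\fp)$ as the subgroup of index $e(\fp)$ inside $D_L(\fp)^G$.

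Multiplying these local indices over all finite primes yields the claimed formula; the product is automatically finite since $e(\fp)=1$ at every unramified prime. The argument is essentially formal and I do not expect any genuine obstacle: the only inputs beyond bookkeeping are the standard facts that $G$ acts transitively on the primes above $\fp$ and that all such primes share a common ramification index, both of which hold because $L/K$ is Galois.
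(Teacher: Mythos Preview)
Your proof is correct and follows essentially the same approach as the paper: both arguments decompose the problem prime-by-prime over $K$, use the transitivity of the $G$-action on the primes above $\fp$ to identify the invariants as $\Z\cdot(\fP_1\cdots\fP_g)$, and read off the local index $e(\fp)$ from the factorisation $\fp\cO_L=(\fP_1\cdots\fP_g)^{e(\fp)}$. The only cosmetic difference is that the paper first forms the quotient $D_L/\tD_K=\bigoplus_\fp I_\fp$ and then takes $G$-invariants, whereas you take invariants first and then compute the index; the content is the same.
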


Since modern introductions to class field theory usually use the
idel theoretic language, some readers may not be familiar with 
this formulation of Lemma~\ref{LDLG}. Here we would like to show 
that the formula can be proved quite easily. References are Artin's 
lectures on class field theory in the appendix of Cohn's book 
\cite[p. 286]{Cohn} and Lang \cite[p. 308]{Lang}. 

We begin our proof with the observation that the group $D_L^G/D_K$ 
is the fix module of $D_L/D_K$; the last group is the direct sum 
(as a $G$-module) of the groups 
$$ I_\fp = \la \fP_1, \ldots, \fP_g \ra / \la \fp \ra, $$
where $\fp \cO_L = (\fP_1 \cdots \fP_g)^{e(\fp)}. $ Since taking
fix modules commutes with direct sums, we find
$$ D_L^G/D_K = \bigoplus_\fp I_\fp^G. $$

Assume now that 
$$ \fP_1^{a_1} \cdots \fP_g^{a_g} \in I_\fp^G. $$
Since $\sigma$ acts transitively on the $\fP_j$, invariance
implies $a_1 = \ldots = a_g =: a$. Thus every ideal in $I_\fp^G$
has the form $(\fP_1 \cdots \fP_g)^a$, and each such ideal is
$G$-invariant. The map $I_\fp \lra \Z/e(\fp)\Z$ defined by sending
$(\fP_1 \cdots \fP_g)^a$ to the residue class $a + e(\fp)\Z$ is
easily seen to be an isomorphism, and now our claim follows:
$$ D_L^G/D_K \simeq \bigoplus_\fp I_\fp^G \simeq \bigoplus_\fp \Z/e(\fp)\Z. $$
This completes the proof of Lemma~\ref{LDLG}.
\medskip

Collecting everything so far we have 
\begin{equation}\label{Am3}
 \# \Am_\st(L/K) = h_K \cdot \frac{\prod_{\fp \fin} e(\fp) }{(H_L^G : \tH_K)}, 
\end{equation}
and so it remains to compute the index in the denominator.
We will do this by reducing it to an index involving numbers 
instead of ideals. To this end we introduce the group
$$ \Delta = \{\alpha \in L^\times: \alpha^{1-\sigma} \in E_L\}. $$
Applying Lemma~\ref{L2} to the diagram
$$ \begin{CD}
    1  @>>> E_L @>>> K^\times E_L   @>>>  \tH_K @>>> 1 \\
   @.      @VVV @VVV @VVV @. \\
    1 @>>>  E_L @>>> \Delta @>>> H_L^G @>>> 1
   \end{CD} $$
where the maps onto the groups of principal ideals is the one 
sending elements to the ideals they generate, shows that 
\begin{equation}\label{EDel1}
        (H_L^G : H_K) = (\Delta : K^\times E_L). 
\end{equation}
Applying Lemma~\ref{L2} to the commutative diagram
$$ \begin{CD}
  1 @>>> K^\times @>>> K^\times E_L   @>{1-\sigma}>>  E_L^{1-\sigma} @>>> 1 \\
   @.      @VVV @VVV @VVV @. \\
  1 @>>>  K^\times @>>> \Delta @>{1-\sigma}>> E_L[N]  @>>> 1
   \end{CD} $$
shows that
\begin{equation}\label{EDel2}  
(\Delta:K^\times E_L) = (E_L [N]: E_L^{1-\sigma}). 
\end{equation}

Observe that $E_L \cap (L^\times)^{1-\sigma} = E_L[N]$: a unit
killed by the norm is an element of $(L^\times)^{1-\sigma}$ by
Hilbert's Theorem 90, and the converse is trivial.

Combining (\ref{EDel1}) and  (\ref{EDel2}) shows that
$$ (H_L^G : H_K) = (E_L [N]: E_L^{1-\sigma}), $$
and plugging this into (\ref{Am3}) we obtain
$$ \# \Am_\st(L/K) = h_K  \cdot
      \frac{\prod_{\fp \fin} e(\fp)}{(E_L[N] : E_L^{1-\sigma})}. $$
Determining the index in the denominator is a nontrivial task,
and the result, in the classical literature, is called the

\begin{thm}[Unit Principal Genus Theorem]\label{TPGU}
For cyclic extensions $L/K$ we have 
$$ \frac{(E_K : NE_L)}{(E_L[N] : E_L^{1-\sigma})} 
                              = \frac1{(L:K)} \prod_{\fp \mid \infty} e(\fp). $$
\end{thm}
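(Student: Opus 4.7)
The plan is to recognise the left-hand side as the Herbrand quotient
$$ Q(E_L) := \frac{(E_L^G : NE_L)}{(E_L[N] : E_L^{1-\sigma})} $$
(note $E_L^G = E_K$) and to compute it by reducing to a permutation lattice via Dirichlet's unit theorem. The key tool will be a ``Herbrand lemma'' stating that $Q$ is (i) trivial on finite $G$-modules and (ii) multiplicative in short exact sequences of $G$-modules, whenever the relevant indices are finite. Both assertions are close cousins of Lemma~\ref{L2}: for (ii) one applies the Snake Lemma to the commutative square whose rows are the sequence in question and whose vertical arrows are first $N$ and then $1-\sigma$; for (i) the four-term sequence $0 \to A^G \to A \to A \to A/(1-\sigma)A \to 0$ with middle arrow $1-\sigma$ forces $\#A^G = \#A/(1-\sigma)A$ when $A$ is finite, and the analogue with $N$ gives $\#A[N] = \#A/NA$.

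Next I would apply Dirichlet's unit theorem through the logarithmic embedding $\lambda : E_L \to \mathbb R^{S_\infty(L)}$, $\lambda(u) = (\log|u|_v)_v$, whose kernel $\mu_L$ is finite and whose image is a full $G$-stable $\Z$-lattice in the hyperplane $V = \{x : \sum_v x_v = 0\}$. Multiplicativity and (i) give $Q(E_L) = Q(\lambda(E_L))$. Since any two $G$-stable full lattices in the same rational subspace of $\mathbb R^{S_\infty(L)}$ are commensurable (their intersection is $G$-stable and of finite index in each), two applications of (ii) show they share the same Herbrand quotient; hence $Q(E_L) = Q(M)$ for the concrete lattice $M := \Z^{S_\infty(L)} \cap V$.

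Finally I would compute $Q(M)$ from the short exact sequence $0 \to M \to \Z^{S_\infty(L)} \to \Z \to 0$ (trivial $G$-action on the quotient). Multiplicativity yields $Q(M) = Q(\Z^{S_\infty(L)})/Q(\Z)$. Directly $Q(\Z) = \ell$, since $\Z^G = \Z$, $N\Z = \ell\Z$, and $\Z[N] = 0 = \Z^{1-\sigma}$. As a $G$-module $\Z^{S_\infty(L)}$ decomposes as $\bigoplus_{\fp \mid \infty} \Z[G/G_\fp]$, where $G_\fp$ is the decomposition group at a place of $L$ above $\fp$. Since $\ell$ is prime, $G_\fp$ is either trivial (unramified case; a direct count gives $Q(\Z[G]) = 1$, as the kernel and image of $N$ agree with those of $1-\sigma$) or all of $G$ (ramified case; $Q = \ell$ as above). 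In either case $|G_\fp| = e(\fp)$ for archimedean $\fp$, so $Q(\Z^{S_\infty(L)}) = \prod_{\fp \mid \infty} e(\fp)$, and the theorem follows.

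The main obstacle is the commensurability step: one must verify that for two $G$-stable full lattices $\Lambda_1, \Lambda_2$ in the same rational representation the intersection $\Lambda_1 \cap \Lambda_2$ is again a $G$-stable full lattice of finite index in each, and then apply (i) and (ii) carefully to transfer $Q$ from $\lambda(E_L)$ to $M$. Once this is in place, the Snake Lemma computations behind the Herbrand lemma and the small permutation-module evaluations are routine.
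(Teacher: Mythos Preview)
Your approach is exactly the Herbrand quotient computation that the paper alludes to but does not carry out: the paper simply cites Theorem~\ref{TPGU} as a known consequence of the Herbrand--Artin unit theorem and the Herbrand index machinery, referring to Lang and Childress. So there is no ``paper's proof'' to compare against beyond that remark, and what you sketch is precisely the standard argument behind those references.

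Your outline is sound except at the commensurability step, where there is a genuine slip. You assert that ``any two $G$-stable full lattices in the same rational subspace of $\mathbb R^{S_\infty(L)}$ are commensurable,'' and later speak of the intersection $\Lambda_1 \cap \Lambda_2$ having finite index in each. But $\lambda(E_L)$ and $M$ are \emph{not} commensurable as subgroups of the real hyperplane $V$: the coordinates of $\lambda(u)$ are logarithms of algebraic numbers and are typically irrational, so $\lambda(E_L) \cap M$ is generally $\{0\}$ (already for $L = \Q(\sqrt 2)$ over $K = \Q$ this intersection is trivial). Lying in the same rational subspace does not force commensurability any more than $\Z$ and $\pi\Z$ are commensurable in $\mathbb R$. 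What is actually true, and what the standard proof uses, is that $\lambda(E_L) \otimes_{\Z} \mathbb R$ and $M \otimes_{\Z} \mathbb R$ are isomorphic as $\mathbb R[G]$-modules (both are $V$), and that two finitely generated $\Q[G]$-modules which become isomorphic over $\mathbb R$ are already isomorphic over $\Q$ (characters determine representations in characteristic~$0$). This gives an abstract $G$-equivariant isomorphism $\lambda(E_L)\otimes\Q \cong M\otimes\Q$; clearing denominators produces a $G$-map $\lambda(E_L)\to M$ with finite kernel and cokernel, and now your properties (i) and (ii) of $Q$ transfer the Herbrand quotient as desired. With this correction the remainder of your argument goes through unchanged.
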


Theorem~\ref{TPGU} is a well known result from class field
theory (it follows painlessly from the unit theorem of Herbrand and
Artin using the Herbrand index machinery; see e.g. 
Lang \cite[p.~192, Cor. 2]{LANT} or Childress \cite[Prop. 5.10]{Chil}.

Plugging this into our expression for $\Am_\st(L/K)$ we get the desired
formula, and our proof is complete.

\section*{Acknowledgements}
I thank the referee for the careful reading of the manuscript and
for many helpful suggestions.

\end{document}